\def\bt{\begin{thm}}
\def\et{\end{thm}}
\def\bl{\begin{lem}}
\def\el{\end{lem}}
\def\bd{\begin{defi}}
\def\ed{\end{defi}}
\def\bc{\begin{cor}}
\def\ec{\end{cor}}
\def\bp{\begin{proof}}
\def\ep{\end{proof}}
\def\br{\begin{rem}}
\def\er{\end{rem}}
\newtheorem{thm}{Theorem}[section]
\newtheorem{prop}[thm]{Proposition}
\newtheorem{lem}[thm]{Lemma}
\newtheorem{defn}[thm]{Definition}
\newtheorem{rem}[thm]{Remark}
\newtheorem{cor}[thm]{Corollary}
\numberwithin{equation}{section}
\newcommand{\cohomology}{H^{1,1}(X,\mathbb{R})}
\newcommand{\nef}{H^{1,1}_{nef}(X,\mathbb{R})}
\newcommand{\pk}{\Bbb{P}^k}
\newcommand{\bthm}{\begin{thm}}
\newcommand{\ethm}{\end{thm}}
\newcommand{\bstp}{\begin{stp}}
\newcommand{\estp}{\end{stp}}
\newcommand{\blemma}{\begin{lemma}}
\newcommand{\elemma}{\end{lemma}}
\newcommand{\bprop}{\begin{prop}}
\newcommand{\eprop}{\end{prop}}
\newcommand{\bpf}{\begin{pf}}
\newcommand{\epf}{\end{pf}}
\newcommand{\bdefn}{\begin{defn}}
\newcommand{\edefn}{\end{defn}}
\newcommand{\brk}{\begin{rmrk}}
\newcommand{\erk}{\end{rmrk}}
\newcommand{\bcrl}{\begin{crl}}
\newcommand{\ecrl}{\end{crl}}
\title{On the automorphism group of rational manifolds}
\author{Turgay Bayraktar}
\date{\today}
\address{Mathematics Department, Johns Hopkins University 21218 Maryland, USA}
\email{bayraktar@jhu.edu}
\keywords{automorphism, rational manifold, topological entropy}
\subjclass[2000]{37F10, 32H50, 14E09}
\begin{document}

\begin{abstract}
 In this note, we prove that every automorphism of a rational manifold which is obtained from $\pk$ by a finite sequence blow-ups along smooth centers of dimension at most $r$ with $k>2r+2$ has zero topological entropy. %Furthermore, we show that if an automorphism of a compact K\"ahler manifold $X$ preserves a numerically effective class with numerical dimension at least $(\dim X-1)$ then it has zero topological entropy. 
\end{abstract}

\maketitle
%\tableofcontents

\section{Introduction}

A holomorphic automorphism of a compact K\"ahler manifold has positive topological entropy if and of if absolute value of one of the eigenvalues of $f^*$ on the cohomology $H^*(X,\Bbb{C})$ is larger than one. It follows from the results of Cantat \cite{C1,C2} that a compact complex surface admit an automorphism with positive entropy if it is K\"ahler and bimeromorphic to one of the following: a rational surface, a torus,  a $K3$ surface or an Enriques surface.  In particular, if $X$ is a rational surface admitting an automorphism with positive entropy then $X$ is obtained from $\Bbb{P}^2$ by blowing up a finite sequence of at least ten points \cite{Nagata}. Examples of rational surface automorphisms with positive entropy were given by \cite{BK2,BKauto,McMullen}. On the other hand, in higher dimensions the question if one can obtain automorphisms with interesting dynamics by blowing up certain subvarieties of $\pk$ remained open. Our first result partially addresses this question:
 
\begin{thm}\label{rational}
Let $X$ be a rational manifold such that $X=X_m$ and $\pi_i:X_{i+1}\to X_i$ is obtained by blowing up a smooth irreducible subvariety of dimension at most $r$ with $k>2r+2$ in $X_i$ where $X_0=\pk.$  If $f:X\to X$ is a holomorphic automorphism then $f$ has zero topological entropy.
\end{thm}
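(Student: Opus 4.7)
\emph{Reduction to the first dynamical degree.} The topological entropy of $f$ equals $\log\rho(f^{*}|_{H^{*}(X,\mathbb{C})})$ by the theorems of Gromov and Yomdin, and for an automorphism of a compact K\"ahler manifold this spectral radius is $\max_{p}\lambda_{p}(f)$ where $\lambda_{p}(f):=\rho(f^{*}|_{H^{p,p}(X,\mathbb{R})})$. Applied to iterated pullbacks of a K\"ahler class, the mixed Hodge--Riemann inequalities yield that $p\mapsto\log\lambda_{p}(f)$ is concave on $\{0,1,\ldots,k\}$. Since $\lambda_{0}(f)=\lambda_{k}(f)=1$ for an automorphism, concavity already gives $\lambda_{p}(f)\geq 1$; it is therefore enough to prove $\lambda_{1}(f)=1$, for once that is known the equality $\log\lambda_{1}-\log\lambda_{0}=0$ together with concavity forces $\log\lambda_{p}$ to be nonincreasing and hence identically zero.

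\emph{Descent to $\mathbb{P}^{k}$.} The space $H^{1,1}(X,\mathbb{R})$ is freely generated by $H=\pi^{*}\mathcal{O}_{\mathbb{P}^{k}}(1)$ together with the total transforms $E_{1},\ldots,E_{m}$ of the exceptional divisors, where $E_{i}\to Z_{i}$ is a $\mathbb{P}^{k-r_{i}-1}$-bundle over a smooth centre of dimension $r_{i}\leq r$. My plan is to show that after passing to a power $f^{N}$, the map descends through the entire tower of blow-downs $X=X_{m}\to X_{m-1}\to\cdots\to X_{0}=\mathbb{P}^{k}$ to a biregular $\bar{f}\in\mathrm{Aut}(\mathbb{P}^{k})=PGL_{k+1}(\mathbb{C})$. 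Because $PGL_{k+1}(\mathbb{C})$ is connected, $\bar{f}^{*}$ acts trivially on $H^{*}(\mathbb{P}^{k},\mathbb{R})$, and the induced action of $(f^{N})^{*}$ on $H^{1,1}(X)$ fixes $H$ as well as each $[E_{i}]$; hence $\lambda_{1}(f^{N})=1$ and $\lambda_{1}(f)=1$.

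\emph{The crucial geometric step.} The descent reduces to showing that $f$ permutes the finite set $\{E_{1},\ldots,E_{m}\}$. Since $f$ is biregular, it permutes the divisorial extremal rays of the Mori cone $\overline{\mathrm{NE}}(X)$, so it suffices to characterise $\{E_{i}\}$ as \emph{exactly} that set of rays. The hypothesis $k>2r+2$, rewritten as $k-r-1>r+1$, says that the fibre dimension of each $E_{i}\to Z_{i}$ strictly exceeds the centre dimension plus one; this numerical gap should be exploited to show that any divisor $D\subset X$ swept out by a family of rational curves whose contraction sends $D$ onto a subvariety of smaller dimension must coincide with one of the $E_{i}$---roughly, a ``new'' exceptional divisor would require a subvariety of $\mathbb{P}^{k}$ violating the inequality. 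Taking $N$ to be the order of the resulting permutation, $f^{N}$ fixes each $E_{i}$ setwise; since the fibration $E_{i}\to Z_{i}$ is canonically determined by its extremal contraction, it is preserved by $f^{N}$, which therefore contracts $E_{m}$ to an automorphism of $X_{m-1}$, and induction completes the descent.

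\emph{The main obstacle.} The heart of the proof is the rigidity statement that the iterated blow-up structure is intrinsic to $X$, i.e.\ that the $E_{i}$'s are forced to be the only divisorial extremal rays. Without the numerical hypothesis $k>2r+2$ this can fail---for surfaces ($k=2$) the analogous claim is obviously false, as blow-ups of $\mathbb{P}^{2}$ at many points acquire extra $(-1)$-curves and do admit positive-entropy automorphisms. The precise use of the codimension inequality, presumably via a direct study of the normal bundles of potential extremal divisors or of the structure of the effective cone, is where the main technical work should lie.
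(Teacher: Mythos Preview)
Your plan is a genuinely different route from the paper's, and as written it has a real gap. You reduce correctly to $\lambda_{1}(f)=1$, but then stake everything on a Mori-theoretic rigidity statement---that the $E_{i}$ are \emph{exactly} the divisorial extremal rays of $\overline{\mathrm{NE}}(X)$---which you do not prove. You yourself flag this as ``the main obstacle'' and only offer a heuristic (fibre dimension $>$ centre dimension $+1$) without explaining how that inequality rules out additional divisorial contractions on an iterated blow-up where centres may lie inside earlier exceptional loci. Even granting the rigidity, the descent step needs more care: the blow-ups are sequential, and knowing that $f^{N}$ fixes each class $[E_{i}]$ in $H^{1,1}(X)$ does not immediately tell you that $f^{N}$ preserves the specific contraction $X_{m}\to X_{m-1}$ (the $E_{i}$ for $i<m$ live on $X$ only as proper transforms). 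So the proposal is a programme, not a proof.

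The paper avoids all of this with a short, purely cohomological argument. The key lemma is that on such an $X$ every nonzero nef class $\alpha$ has numerical dimension $\nu(\alpha)\geq k-r-1$; this is proved by the elementary intersection computation $\alpha^{k-r-1}\cdot H_{X}^{r+1}=a^{k-r-1}>0$, where $a=\alpha\cdot H_{X}^{k-1}>0$ because the images $\pi(E_{i})$ have codimension $\geq 2$ (so a generic line misses them) and dimension $\leq r$ (so a generic codimension-$(r+1)$ linear space misses them). Given this, a Perron--Frobenius nef eigenclass for $\lambda_{1}(f)$ forces $\lambda_{1}(f)^{j}=\lambda_{j}(f)$ for all $j\leq k-r-1$; the same holds for $f^{-1}$. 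Combining with $\lambda_{i}(f)=\lambda_{k-i}(f^{-1})$ one gets $\lambda_{1}(f)^{(k-r-1)^{2}}=\lambda_{1}(f)^{(r+1)^{2}}$, and $k>2r+2$ makes the exponents differ, so $\lambda_{1}(f)=1$. No Mori theory, no descent, no structure of the effective cone beyond the single intersection number above.
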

In particular, if $X$ is obtained from $\pk$ with $k\geq 3$ by blowing up  a finite sequence of points then every holomorphic automorphism of $X$ has zero topological entropy. This was observed in \cite{tuyen} when $k=3$. \\ \indent 
A cohomology class $\alpha \in \cohomology$ is called \textit{numerically effective} (nef in short) if $\alpha$ lies in the closure of classes of K\"ahler forms. Following \cite{Kawamata} we define \textit{numerical dimension} of a nef class as
  $$\nu(\alpha):=\max\{p\in\Bbb{N}: \alpha^p:=\alpha\wedge \dots \wedge \alpha\neq 0\ \text{in} \ H^{p,p}(X,\Bbb{R})\}.$$ 
Next, we prove that if an automorphism of a compact K\"ahler manifold preserves a numerically-effective class $\alpha \in \cohomology$ with large numerical dimension then it has zero entropy.
\begin{thm}\label{numerical}
Let $X$ be a compact K\"ahler manifold with and $f\in Aut(X).$ If there exists a nef class $\alpha\in \cohomology$ such that $\nu(\alpha)\geq \dim X-1$ and $f^*\alpha= \alpha$ then $f$ has zero topological entropy. 
\end{thm}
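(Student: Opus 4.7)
The plan is to reduce to showing that $\lambda_1(f)$, the spectral radius of $f^*$ on $H^{1,1}(X,\mathbb{R})$, equals $1$; by the Gromov--Yomdin theorem $h_{top}(f)=\log\max_p\lambda_p(f)$, and the log-concavity of dynamical degrees (Khovanskii--Teissier--Gromov) together with the endpoint conditions $\lambda_0(f)=\lambda_n(f)=1$ for an automorphism yields $1\le\lambda_p(f)\le\lambda_1(f)^p$, so entropy vanishes if and only if $\lambda_1(f)=1$.

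Assume for contradiction that $\lambda_1>1$. Since $f^*$ preserves the closed, convex, salient nef cone with non-empty interior, a Perron--Frobenius argument produces a nonzero nef class $\theta$ with $f^*\theta=\lambda_1\theta$. A direct equivariance calculation then yields the strong vanishing
$$\alpha^k\cdot\theta^{n-k}=0 \qquad \text{for all } 0\le k\le n-1,$$
since $f^*$ acts trivially on $H^{n,n}(X,\mathbb{R})$ while $f^*(\alpha^k\cdot\theta^{n-k})=\lambda_1^{n-k}(\alpha^k\cdot\theta^{n-k})$. In particular $\theta^n=0$, $\alpha^{n-1}\cdot\theta=0$, and $\alpha^{n-2}\cdot\theta^2=0$.

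The main tool is the $f^*$-invariant bilinear form $Q(\beta,\gamma):=\alpha^{n-2}\cdot\beta\cdot\gamma$ on $H^{1,1}(X,\mathbb{R})$. Applying the Hodge index theorem to the Kähler approximations $\alpha+\epsilon\omega$ ($\omega$ any Kähler class, $\epsilon\to 0^+$)---and using $\alpha^{n-1}\ne 0$ so that the associated primitive hyperplanes converge to $P:=\{\beta:\alpha^{n-1}\cdot\beta=0\}$---the form $Q$ becomes negative semi-definite on $P$. Since $\theta\in P$ with $Q(\theta,\theta)=0$ by the vanishing above, Cauchy--Schwarz for semi-definite forms places $\theta$ in the radical $K$ of $Q|_P$. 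On the quotient $P/K$ the form becomes negative definite, so $f^*|_{P/K}$ is an isometry of a definite form and has all eigenvalues on the unit circle; combined with the trivial action of $f^*$ on $H^{1,1}/P\cong\mathbb{R}$, the eigenvalue $\lambda_1>1$ must appear only in the action of $f^*$ on the radical $K$.

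The hard part will be ruling out such a nef eigenvector in $K$. When $\nu(\alpha)=n$, $\alpha$ is big and nef, and one argues via Hard Lefschetz for the Kähler approximations together with the strict positivity of $\alpha^{n-1}$ against Kähler classes that $K$ contains no nonzero nef class, forcing $\theta=0$. In the boundary case $\nu(\alpha)=n-1$ the argument is more delicate: using the complete vanishing $\alpha^k\cdot\theta^{n-k}=0$ for $0\le k\le n-1$ (which implies in particular $(\alpha+t\theta)^n=\alpha^n$ for all $t\ge 0$) together with the strict positivity $\alpha^{n-1}\cdot\omega>0$ for every Kähler $\omega$ and a careful limiting argument with the Kähler approximations, one again deduces $\theta=0$. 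The central obstacle throughout is the precise control of the radical of the Hodge--Riemann form in the non-Kähler regime.
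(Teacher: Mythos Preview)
Your setup is sound and matches the paper: reduce to $\lambda_1(f)=1$, take a nef Perron--Frobenius eigenclass $\theta$ with $f^*\theta=\lambda_1\theta$, and use equivariance to get the top-degree vanishings $\alpha^{k}\cdot\theta^{n-k}=0$. The Hodge--index framing via $Q(\beta,\gamma)=\alpha^{n-2}\cdot\beta\cdot\gamma$ is also reasonable and indeed shows that $\theta$ lies in the radical $K$ of $Q|_P$.

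The gap is the last paragraph. You yourself flag it as ``the hard part'' and then, for both $\nu(\alpha)=n$ and $\nu(\alpha)=n-1$, replace an argument by phrases like ``one argues via Hard Lefschetz'' and ``a careful limiting argument \ldots one again deduces $\theta=0$''. These are not proofs. Concretely, knowing $\theta\in K$ only says that $\alpha^{n-2}\cdot\theta$ kills every $\gamma\in P$; it does not by itself force $\alpha^{n-2}\cdot\theta=0$ in $H^{n-1,n-1}$, let alone $\theta=0$. Your single bilinear form $Q$ controls only the top slice $\alpha^{n-2}\cdot\theta\cdot(-)$; it gives no mechanism to descend to $\alpha^{n-3}\cdot\theta$, $\alpha^{n-4}\cdot\theta$, etc. Without that descent you cannot reach $\alpha\cdot\theta=0$, which is what ultimately yields the contradiction. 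Hard Lefschetz is not available for a merely nef $\alpha$, and the ``limiting argument'' you invoke is exactly where the difficulty with degenerating Hodge--Riemann forms lives; you have not indicated how to overcome it.

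The paper closes this gap with a clean iterative device: the Dinh--Sibony lemma (their Proposition~2.2). From $\alpha^{n-1}\cdot\theta=0$ and $\alpha^{n-2}\neq 0$ one gets a unique (up to scale) relation $(a\theta+b\alpha)\cdot\alpha^{n-2}=0$; pulling back by $f$ gives $(a\lambda_1\theta+b\alpha)\cdot\alpha^{n-2}=0$, and uniqueness together with $\lambda_1>1$ forces $b=0$, i.e.\ $\theta\cdot\alpha^{n-2}=0$. Repeating the same step with one fewer factor of $\alpha$ drives the exponent down until $\theta\cdot\alpha=0$, whereupon part (1) of the same proposition yields $\theta=c\alpha$, contradicting $\lambda_1>1$. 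This descent, powered by the uniqueness clause and the $f^*$-equivariance, is precisely the missing idea in your proposal; your Hodge--index setup is morally the $r=n-2$ case of that lemma, but you need all the intermediate cases to finish.
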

 In some sense Theorem \ref{numerical} can be considered a generalization of Liberman's result \cite{Lieberman} (see also \cite{Zhang} for big and nef case) which asserts that if $X$ is a compact K\"ahler manifold and $f\in Aut(X)$ preserves a K\"ahler class then an iterate of $f$ belongs to $Aut_0(X),$ the connected component of the identity, and hence $f$ has zero topological entropy.

% Theorem \ref{fano} can be considered as a generalization of Nagata's theorem which asserts that automorphism group of a compact complex surface which is obtained by blowing up $d\leq 8$ points is finite and hence any automorphism of such surface has zero entropy. Recall that such surfaces are called del Pezzo surfaces.  
\section*{Acknowledgement}
I would like to thank Mattias Jonsson and Tuyen Truong for their valuable comments on an earlier draft. I am also grateful to Brian Lehmann for stimulating correspondence.

\section{Preliminaries}
Let $X$ be a compact K\"ahler manifold. We denote the de Rham (respectively Dolbeault) cohomology groups by $H^{2p}(X,\Bbb{R})$ (respectively $H^{p,p}(X,\Bbb{C}))$ and define $$H^{p,p}(X,\Bbb{R}):=H^{p,p}(X,\Bbb{C})\cap H^{2p}(X,\Bbb{R}).$$ Note that $H^{2p}(X,\Bbb{R}),\ H^{p,p}(X,\Bbb{C})$ are finite dimensional and one can identify $H^{p,p}(X,\Bbb{R})$ with a real subspace of $H^{p,p}(X,\Bbb{C}).$ In the sequel, we implicitly use the fact that the cohomology classes can be defined in terms of smooth forms or currents. We refer the reader to \cite{GH} for basic results in Hodge theory. A cohomology class $\alpha \in \cohomology$ is called \textit{numerically effective} (nef in short) if $\alpha$ lies in the closure of classes of K\"ahler forms. %The class $\alpha$ is called \textit{pseudo-effective} (psef in short) if it can be represented by a positive closed current. 
The set of nef classes $\nef$ forms a closed convex cone which is strict that is $\nef \cap -\nef=\{0\}.$% and $\nef \subset \psef.$ %In general, the inclusion is strict.   \\ \indent
 %In general, the wedge product of two positive closed $(1,1)$ currents is not well-defined. In \cite{BouT}, Boucksom introduced a canonical ``positive product" (see also \cite{BDPP}) of psef classes 
 %$$\langle\cdot\rangle:\psef\times \dots \times \psef \to H^{p,p}_{psef}(X,\Bbb{R})$$
 %$$(\alpha_1,\alpha_2,\dots,\alpha_p)\to \langle\alpha_1\cdot \alpha_2 \cdots \alpha_p\rangle.$$ 
%Moreover, if $\alpha_1,\alpha_2,\dots,\alpha_p \in \nef$ then $\langle\alpha_1\cdot \alpha_2\cdots\alpha_p\rangle$ coincides with the ordinary cup product that is $$\alpha_1\wedge\alpha_2\wedge\dots\wedge\alpha_p=\langle\alpha_1\cdot\alpha_2\cdots \alpha_p\rangle.$$

%Let $X$ be a projective manifold of dimension $k$. 
 We let $Pic(X)$ denote the Picard gourp of $X$ that is isomorphism classes of line bundles with the group operation tensor product and denote the Chern map by $$c_1:Pic(X) \to H^2(X,\Bbb{Z}).$$ By a slight abuse of notation we will write $c_1(L)\in H^2(X,\Bbb{R})$ where we consider the image of $c_1(L)$ under the inclusion $i:H^2(X,\Bbb{Z})\to H^2(X,\Bbb{R}).$ The Neron-Severi group of $X$ is defined by $NS(X)=c_1(Pic(X))\subset H^2(X,\Bbb{R})$ that is the Chern classes of line bundles on $X.$ It follows from Lefschetz theorem on $(1,1)$ classes that $$NS(X)=H^2(X,\Bbb{Z})\cap H^{1,1}(X,\Bbb{R}).$$ We also let $NS_{\Bbb{R}}(X)$ be the real vector space $NS_{\Bbb{R}}(X)=NS(X) \otimes \Bbb{R}\subset H^2(X,\Bbb{R}).$\\ \indent
  A holomorphic line bundle $L$ is called \textit{numerically effective} (nef) if $$L\cdot C=\int_Cc_1(L)\geq 0$$ for every curve $C\subset X$. It follows from \cite{DemS} that $L$ is nef if and only if $c_1(L)\in \nef$. A line bundle $L$ is said to be \textit{big} if $\kappa(L)=\dim X$ where $\kappa(L)$ denotes the Kodaira-Iitaka dimension of $L.$ It is well-known that a nef line bundle $L$ is big if and only if $L^k:=\int_Xc_1(L)^k>0.$\\ \indent
 % In \cite{Kawamata}, Kawamata defined numerical dimension of a nef line bundle $L$ as 
  %$$\nu(L):=\max\{p\in \Bbb{N}: L^p\cdot A^{k-p}\neq 0\}.$$
 % More recently, \cite{BDPP} extended this notion to pseudo-effective classes in terms of the positive products. Namely, for a class $\alpha \in \psef$ its numerical dimension is defined by 
  %$$\nu(\alpha)=\max\{p\in\Bbb{N}: \langle\alpha^p\rangle\neq 0\ \text{in} \ H^{p,p}(X,\Bbb{R})\}.$$ 
  %It is clear that $\nu(\alpha)\in \{0,1,\dots,k\}.$ If $L$ is a line nef bundle then it follows from \cite[Proposition 2.2]{Kawamata} that $\kappa(L)\leq \nu(L).$ Furthermore, it is well known that a class $\alpha\in \cohomology$ is big if and only if $\nu(\alpha)=k.$
  %Following \cite{Kawamata} we define numerical dimension of a nef line bundle: 
  %\begin{defn}
 % Let $L$ be a nef line bundle on $X$. The numerical dimension of $L$ is defined by 
  %$$\nu(L)=max\{p=0,1,\dots,k: c_1(L)^p\not=0\ \text{in}\ H^{p,p}(X,\Bbb{R})\}. $$
  %\end{defn}
  %It follows that $\kappa(L)\leq \nu(L).$ Moreover, a line bundle $L$ is big if and only if $\nu(L)=\dim X$  \cite{Kawamata}.
  %\\ \indent
   %If $f:X\to X$ is an holomorphic map then it induces a linear action $$f^*:H^2(X,\Bbb{R})\to H^2(X,\Bbb{R})$$ by pulling -back the smooth forms. Moreover, it commutes with the Chern map that is for $L\in Pic(X)$ we have
% $$c_1(f^*L)=f^*(c_1(L)).$$

\subsection{Dynamics of automorphisms of compact K\"ahler manifolds}
Let $X$ be a compact K\"ahler manifold of dimension $k$ and $\omega$ be a fixed K\"ahler form on $X$. We let $Aut(X)$ denote the set of holomorphic automorphisms of $X$. Every $f\in Aut(X)$ induces a linear action 
$$f^*:H^{p,p}(X,\Bbb{R}) \to H^{p,p}(X,\Bbb{R})$$
$$f^*\{\theta\}:=\{f^*\theta\}$$
where $\{\theta\}$ denotes the class of the smooth $(p,p)$ form $\theta$ in $H^{p,p}(X,\Bbb{R}).$\\ \indent
For $f\in Aut(X)$ the $i^{th}$ \textit{dynamical degree} of $f$ is defined by 
$$\lambda_i(f):=\limsup_{m\to \infty}(\int_X(f^n)^*\omega^i\wedge \omega^{k-i} )^{\frac{1}{n}}.$$
%In fact, the sequence of integrals is sub-multiplicative and the limsup coincides with the limit.\\ \indent
%For a linear map $T:V\to V$ of a finite dimensional complex vector space $V$ we let $\rho(T)$ denotes the spectral radius of $T$ that is the largest of modulus of eigenvalues of $T.$ 
Since $X$ is compact this definition is independent of $\omega.$
The following properties of dynamical degrees are well known \cite{DS04}:
\begin{lem} \label{degree}
Let $f:X\to X$ be an automorphism. Then 

\begin{itemize}
\item[(i)] $1\leq \lambda_i$ is the spectral radius of $f^*_{|_{H^{i,i}(X,\Bbb{R})}}.$  

\item[(ii)] $i \to \log\lambda_i(f)$ is concave on $\{0,1,\dots, k\}.$
\item[(iii)] $\lambda_1(f)^i \geq \lambda_i(f)$ and $\lambda_i(f)^i\geq \lambda_1(f)$ for $1\leq i\leq k-1$.

\item[(iv)] $\lambda_i(f)=\lambda_{k-i}(f^{-1})$ for $i\in\{0,1,\dots,k\}.$
\end{itemize}

\end{lem}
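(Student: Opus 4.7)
I would establish the four assertions in the order (iv), (ii), (iii), (i), which reflects their logical dependence: (iv) is a change--of--variable, (ii) supplies the concavity needed for (iii), and (i) combines parts of the previous steps with a Perron--Frobenius argument.

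For (iv), since each $f^n$ is a diffeomorphism, integration of a top form is invariant under pullback, so
\[
\int_X (f^n)^*\omega^i\wedge \omega^{k-i} \;=\; \int_X \omega^i\wedge (f^{-n})^*\omega^{k-i};
\]
taking $(1/n)$--th powers and $\limsup$ yields $\lambda_i(f)=\lambda_{k-i}(f^{-1})$. For (ii), the substantive analytic input is the Khovanskii--Teissier (mixed Hodge--Riemann) inequality: for any two nef classes $\alpha,\beta$ on $X$ and $1\le i\le k-1$,
\[
(\alpha^i\cdot\beta^{k-i})^2\;\ge\; (\alpha^{i-1}\cdot\beta^{k-i+1})(\alpha^{i+1}\cdot\beta^{k-i-1}).
\]
Applied to $\alpha=(f^n)^*\omega$ and $\beta=\omega$, taking $(1/n)$--th roots and passing to the limit gives $\lambda_i(f)^2\ge \lambda_{i-1}(f)\,\lambda_{i+1}(f)$, i.e. concavity of $i\mapsto\log\lambda_i(f)$.

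For (iii), I first verify the endpoint values $\lambda_0(f)=\lambda_k(f)=1$: the $i=0$ case is $\int_X\omega^k$, constant in $n$, and the $i=k$ case is $\int_X(f^n)^*\omega^k=\int_X\omega^k$ by the change--of--variable used in (iv). Writing $g(i):=\log\lambda_i(f)$, concavity plus $g(0)=g(k)=0$ forces $g\ge 0$, which also yields the bound $\lambda_i\ge 1$ needed in (i). From $g(0)=0$, discrete concavity makes $i\mapsto g(i)/i$ non--increasing for $i\ge 1$, so $i\,g(1)\ge g(i)$, i.e. $\lambda_1^i\ge\lambda_i$; symmetrically, $g(k)=0$ makes $i\mapsto g(i)/(k-i)$ non--decreasing for $i\le k-1$, from which the elementary inequality $i(k-i)\ge k-1$ on $\{1,\dots,k-1\}$ yields $\lambda_i^i\ge\lambda_1$.

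For (i), because $f$ is a genuine automorphism one has $(f^n)^*=(f^*)^n$ on $H^{\bullet,\bullet}(X,\mathbb{R})$, and the defining integral is the Poincar\'e pairing $\langle(f^*)^n[\omega]^i,\,[\omega]^{k-i}\rangle$. Since $f^*$ preserves the closed cone of pseudo--effective $(i,i)$--classes, $[\omega]^i$ lies in its interior, and $\alpha\mapsto\int_X\alpha\wedge\omega^{k-i}$ is strictly positive on this cone, a Perron--Frobenius--type argument identifies
\[
\lim_{n\to\infty} \Bigl(\int_X (f^n)^*\omega^i\wedge\omega^{k-i}\Bigr)^{1/n}
\]
with the spectral radius of $f^*|_{H^{i,i}(X,\mathbb{R})}$. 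The main obstacle is the Khovanskii--Teissier inequality driving (ii); its proof via the Hard Lefschetz theorem or Hodge--Riemann bilinear relations is deep, but here it is invoked as a black box from \cite{DS04}. A secondary subtlety is ensuring in (i) that testing against the single class $[\omega]^{k-i}$ captures the full operator norm of $(f^*)^n$ on $H^{i,i}$, which is precisely what the cone--preservation and strict positivity observations supply.
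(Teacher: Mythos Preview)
The paper does not actually prove this lemma: it is stated with the preface ``The following properties of dynamical degrees are well known \cite{DS04}'' and no argument is given. So there is no proof in the paper to compare your attempt against; your sketch is essentially a reconstruction of the standard Dinh--Sibony argument that the citation points to.

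That said, your outline is substantively correct. Two small organizational remarks. First, the passage to the limit in (ii) is cleanest once you already know the limits exist, i.e.\ once the spectral--radius identification in (i) is in hand; otherwise the inequality $d_i(n)^2\ge d_{i-1}(n)d_{i+1}(n)$ does not pass directly through a $\limsup$ on both sides. In practice one first observes that $(f^n)^*=(f^*)^n$ and that the pairing $\langle (f^*)^n[\omega]^i,[\omega]^{k-i}\rangle$ is a matrix coefficient whose $n$-th root converges to the spectral radius (your Perron--Frobenius/cone argument), and only then deduces (ii) and (iii). Second, the inequality $\lambda_i\ge 1$ in (i) is obtained, as you note, from concavity together with $\lambda_0=\lambda_k=1$; it does not require any separate input. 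With these dependencies straightened out, your argument matches the one the paper is citing.
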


%Next, we define the topological entropy of a holomorphic map: Let $f:X\to X$ be a holomorphic endomorphism...
 
\begin{thm}[Gromov and Yomdin]\label{GY}
Let $f\in Aut(X)$ then the topological entropy of $f$ is given by
$$h_{top}(f)=\max_{0\leq i \leq k}\log\lambda_i(f).$$ 
\end{thm}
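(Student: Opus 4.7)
The plan is to prove $h_{top}(f)=\max_i\log\lambda_i(f)$ by establishing the matching upper and lower bounds due to Gromov and Yomdin respectively, both of which apply because $f$ is in particular a $C^\infty$ diffeomorphism of a compact manifold.

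For the upper bound, I would introduce the iterated graphs
$$\Gamma_n:=\{(x,f(x),\ldots,f^{n-1}(x)):x\in X\}\subset X^n,$$
each a compact complex submanifold of $X^n$ of complex dimension $k$. Gromov's theorem asserts that for any smooth self-map of a compact Riemannian manifold,
$$h_{top}(f)\leq\limsup_{n\to\infty}\tfrac{1}{n}\log\mathrm{vol}(\Gamma_n).$$
Equipping $X^n$ with the product K\"ahler form $\Omega_n=\sum_{i=0}^{n-1}\pi_i^*\omega$, Wirtinger's formula gives
$$k!\,\mathrm{vol}(\Gamma_n)=\int_X\Big(\sum_{i=0}^{n-1}(f^i)^*\omega\Big)^k=\sum_{0\le a_1,\ldots,a_k<n}\int_X(f^{a_1})^*\omega\wedge\cdots\wedge(f^{a_k})^*\omega.$$
The key step is to bound each mixed intersection by a constant multiple of $\lambda(f)^n$, where $\lambda(f):=\max_i\lambda_i(f)$. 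Using the mixed Khovanskii--Teissier inequality together with Lemma \ref{degree}(i), each such integral is controlled by a product of the $\lambda_i(f)$, and the concavity in Lemma \ref{degree}(ii) lets every such product be dominated by $\lambda(f)^n$. Summing over the $n^k$ multi-indices yields $\mathrm{vol}(\Gamma_n)\leq Cn^k\lambda(f)^n$, and the upper bound follows on taking the $n$-th root.

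For the lower bound, I would invoke Yomdin's theorem: for any $C^\infty$ self-map of a compact smooth manifold,
$$h_{top}(f)\geq\max_j\log\rho\bigl(f^*|H^j(X,\mathbb{R})\bigr),$$
where $\rho$ denotes spectral radius. By Lemma \ref{degree}(i), $\lambda_i(f)$ is the spectral radius of $f^*$ on $H^{i,i}(X,\mathbb{R})$; since this is an $f^*$-invariant subspace of $H^{2i}(X,\mathbb{R})$, we obtain $\lambda_i(f)\leq\rho\bigl(f^*|H^{2i}(X,\mathbb{R})\bigr)$ for every $i$. Combining with Yomdin gives $\max_i\log\lambda_i(f)\leq h_{top}(f)$, which is the desired bound.

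The main obstacle is Yomdin's lower bound itself, a deep $C^\infty$ result whose proof uses his reparametrization lemma together with Newhouse's local volume estimates; these are genuinely smooth-category tools and admit no purely K\"ahler-geometric shortcut. On the Gromov side the subtlety lies in the cohomological control of the mixed intersection numbers, where the concavity of $i\mapsto\log\lambda_i$ is what converts an a priori product $\prod_j\lambda_{i_j}(f)^{a_j}$ with $\sum a_j=n$ into the clean bound $\lambda(f)^n$.
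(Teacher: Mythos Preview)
The paper does not give a proof of this theorem: it is quoted as a classical result of Gromov and Yomdin and used as a black box, so there is no ``paper's own proof'' to compare against. Your proposal is therefore not a reconstruction of anything in the paper but rather an independent sketch of the standard proof.

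As a sketch it is broadly correct. Two remarks. On the Gromov side, the invocation of a ``mixed Khovanskii--Teissier inequality'' to bound each term $\int_X(f^{a_1})^*\omega\wedge\cdots\wedge(f^{a_k})^*\omega$ by $C\lambda(f)^n$ is imprecise; the usual route is either Gromov's original cohomological bound $h_{top}(f)\le\log\rho(f^*|_{H^*(X,\mathbb{C})})$ combined with the Dinh--Sibony fact that the spectral radius of $f^*$ on $H^{p,q}$ is at most $\sqrt{\lambda_p\lambda_q}$ (so that, by log-concavity, the maximum over all of $H^*$ is attained on some $H^{i,i}$), or an inductive estimate on mixed degrees after translating so the smallest $a_i$ is zero. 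Your outline would need one of these to become an actual argument. On the Yomdin side, your formulation is fine for a sketch, but note that Yomdin's theorem is stated in terms of volume growth (``lov''); the passage to spectral radii on cohomology is an extra, though easy, step.
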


%The following lemma is a consequence of Hodge-Riemann bilinear relations \cite{GH}:
 %\begin{lem}\label{nef}
%Let $\alpha, \beta \in \nef$. If $\alpha \cdot \beta=0$ then $\alpha=c\beta$ for some $c\in \Bbb{R}$. 
 %\end{lem} 
 The next proposition will be useful in the proof of Theorem \ref{numerical}:
 \begin{prop}\cite{DSauto}\label{DS}
 Let $\alpha,\alpha',\alpha_1,\dots, \alpha_r \in \cohomology$ be nef classes where $r\leq k-2.$ 
 \begin{itemize}
 \item[(1)] If $\alpha\wedge \alpha'=0$ then $\alpha$ and $\alpha'$ are colinear. 
 \item[(2)] If $\alpha\wedge \alpha' \wedge \alpha_1 \dots \wedge \alpha_r=0\ \text{in}\ H^{r+2,r+2}(X,\Bbb{R})$ then there exists real numbers $(a,b)\not=(0,0)$ such that $$(a\alpha+b\alpha')\wedge \alpha_1\dots \wedge \alpha_r=0.$$ Furthermore, if $\alpha'\wedge \alpha_1\dots\wedge \alpha_r\not=0$ then the pair $(a,b)$ is unique up to a multiplicative constant. 
 \end{itemize}
 \end{prop}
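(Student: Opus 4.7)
My plan is to derive both parts from the mixed Hodge--Riemann bilinear relations (Gromov, Timorin, Dinh--Nguyen) combined with a Lorentzian / reverse-Cauchy--Schwarz argument applied to the nef cone.

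For part (1), I would fix a K\"ahler class $\omega$ and consider the symmetric bilinear form
\[
Q(u,v)=\int_X u\wedge v\wedge\omega^{k-2}
\]
on $\cohomology$. Mixed Hodge--Riemann (which in this special case amounts to classical Hodge index plus the Lefschetz decomposition) endows $Q$ with signature $(1,\dim\cohomology-1)$. Any nef class $\beta$ satisfies $Q(\beta,\beta)=\int\beta^2\wedge\omega^{k-2}\ge 0$ and $Q(\beta,\omega)=\int\beta\wedge\omega^{k-1}\ge 0$, so it lies in the closed forward light cone of $Q$. The assumption $\alpha\wedge\alpha'=0$ in $H^{2,2}(X,\Bbb{R})$, paired against $\omega^{k-2}$, yields $Q(\alpha,\alpha')=0$. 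The reverse Cauchy--Schwarz inequality for Lorentzian forms---$Q(u,v)^2\ge Q(u,u)Q(v,v)$ for $u,v$ in a common future cone, with equality only for colinear pairs---then forces $\alpha$ and $\alpha'$ to be proportional.

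For part (2) I would set $\Omega=\alpha_1\wedge\cdots\wedge\alpha_r$ and first dispose of the degenerate case $\Omega=0$ in $H^{r,r}(X,\Bbb{R})$, where $(a,b)=(1,0)$ works trivially. For auxiliary K\"ahler classes $\omega_1,\ldots,\omega_{k-r-2}$ I consider
\[
Q_\Omega(u,v)=\int_X u\wedge v\wedge\Omega\wedge\omega_1\wedge\cdots\wedge\omega_{k-r-2}.
\]
Approximating each nef $\alpha_i$ by $\alpha_i+\varepsilon\omega$ (K\"ahler) and letting $\varepsilon\downarrow 0$ exhibits $Q_\Omega$ as a limit of signature-$(1,\dim\cohomology-1)$ forms, so $Q_\Omega$ has at most one positive eigenvalue; under $\Omega\ne 0$ one has $Q_\Omega(\omega,\omega)>0$, hence exactly one. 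Nef classes lie in the closed future cone of $Q_\Omega$, and the hypothesis $\alpha\wedge\alpha'\wedge\Omega=0$ gives $Q_\Omega(\alpha,\alpha')=0$. Running the Lorentzian argument for this possibly degenerate form produces $(a,b)\ne(0,0)$ placing $a\alpha+b\alpha'$ in the null-space of $Q_\Omega$, i.e.
\[
(a\alpha+b\alpha')\wedge\Omega\wedge\omega_1\wedge\cdots\wedge\omega_{k-r-2}=0\quad\text{in } H^{k-1,k-1}(X,\Bbb{R}).
\]

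The step I expect to be the real obstacle is upgrading this polarized identity to the class equality $(a\alpha+b\alpha')\wedge\Omega=0$ in $H^{r+1,r+1}(X,\Bbb{R})$. The plan is to exploit the freedom in the auxiliary data: when $\alpha'\wedge\Omega\ne 0$, the null direction of $Q_\Omega$ containing $\alpha$ and $\alpha'$ is one-dimensional, which pins the ratio $a/b$ down independently of the choice of $\omega_1,\ldots,\omega_{k-r-2}$. Varying these classes throughout the K\"ahler cone and then pairing against arbitrary test classes, one uses Poincar\'e duality together with the fact that products of K\"ahler classes---combined with hard Lefschetz---generate enough of $H^{k-r-1,k-r-1}(X,\Bbb{R})$ to conclude that the polarized identity lifts to equality of cohomology classes. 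The stated uniqueness of $(a,b)$ up to a scalar (when $\alpha'\wedge\Omega\ne 0$) is a direct by-product of this one-dimensional null direction.
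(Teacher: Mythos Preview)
The paper does not prove this proposition; it is quoted without argument from Dinh--Sibony \cite{DSauto}. Your overall strategy---the mixed Hodge--Riemann bilinear relations and the resulting Lorentzian signature on $\cohomology$---is precisely the method of that reference, and your treatment of part~(1) is correct.

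For part~(2) you have correctly isolated the crux: upgrading from the polarized vanishing $(a\alpha+b\alpha')\wedge\Omega\wedge\omega_1\wedge\cdots\wedge\omega_{k-r-2}=0$ in $H^{k-1,k-1}(X,\Bbb{R})$ to the class equality $(a\alpha+b\alpha')\wedge\Omega=0$ in $H^{r+1,r+1}(X,\Bbb{R})$. Your proposed resolution, however, does not work as written. First, the claim that the ratio $a:b$ is independent of the auxiliary K\"ahler classes is asserted rather than proved: the quadratic form $Q_\Omega$, and hence its isotropic directions, genuinely varies with the $\omega_i$, so one cannot simply ``vary and intersect''. Second, even granting a fixed ratio, products of $(1,1)$-classes need not span $H^{k-r-1,k-r-1}(X,\Bbb{R})$, and hard Lefschetz makes $\wedge\,\omega^{k-r-2}$ injective on $H^{r+1,r+1}$ only when $k-r-2\le k-2(r+1)$, i.e.\ $r=0$; so Poincar\'e duality against such products is insufficient to force the class to vanish for $r\ge 1$. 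The ingredient you are missing, and which is supplied in \cite{DSauto}, is that a product $\gamma_1\wedge\cdots\wedge\gamma_p$ of nef classes is represented by a closed positive $(p,p)$-current, so that $\int_X\gamma_1\wedge\cdots\wedge\gamma_p\wedge\omega^{k-p}=0$ already forces $\gamma_1\wedge\cdots\wedge\gamma_p=0$ in $H^{p,p}$. Using this, the Lorentzian equality case yields $\alpha^2\wedge\Omega=(\alpha')^2\wedge\Omega=0$ as honest cohomology identities (not merely after polarizing), and the argument can then be completed. Note that the proof of Theorem~\ref{numerical} in the present paper applies part~(2) for every $r$ from $k-2$ down to $1$, so this gap is not merely cosmetic for the application.
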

\section{Proofs of Theorem 1.1 and Theorem 1.2}
\begin{proof}[Proof of Theorem \ref{rational}]
Let $X=X_m$ be a rational manifold where $\pi_i:X_{i+1}\to X_i$ is obtained by blowing up a smooth irreducible subvariety $Y_i \subset X_i$ of dimension at most $r$ and $X_0=\pk$ with $k> 2r+2.$\\ By Lemma \ref{degree} we have $\lambda_1(f)^{k-r-1}\geq \lambda_{k-r-1}(f).$ 
First, we will show that $$\lambda_1^{k-r-1}(f)=\lambda_{k-r-1}(f).$$
Indeed, assuming otherwise $\lambda_1^{k-r-1}(f)>\lambda_{k-r-1}(f)$ we will derive a contradiction. Since $f^*$ preserves the nef cone it follows from a version of Perron-Frobenius theorem \cite{Bir} that there exists a nef class $\alpha \in NS_{\Bbb{R}}(X)$ such that $f^*\alpha=\lambda_1(f)\alpha.$ Now, as $f^*$ preserves the intersection product and $\lambda_1(f)^{k-r-1}>\lambda_{k-r-1}(f)$ we see that $\alpha^{k-r-1}=0$ in $H^{k-r-1,k-r-1}(X,\Bbb{R})$. Therefore, $\nu(\alpha)\leq k-r-2.$ Thus, the assertion follows from the next lemma:
\begin{lem}
Let $X=X_m$ be a rational manifold where $\pi_i:X_{i+1}\to X_i$ is obtained by blowing up a smooth irreducible subvariety $Y_i \subset X_i$ of dimension at most $r$ and $X_0=\pk$ with $k\geq r+2.$ If $\alpha \in \nef$ is non-zero then $\nu(\alpha)\geq k-r-1.$
\end{lem}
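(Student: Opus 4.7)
The plan is to push the class all the way down to $\pk$, where $H^{1,1}(\pk,\mathbb{R})=\mathbb{R}\cdot H$ is one-dimensional, and to conclude with Proposition \ref{DS}. No genuine induction is needed beyond iterating a single pushforward identity $m$ times.

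The key identity is that for a single blow-up $\pi: X_i \to X_{i-1}$ with exceptional divisor $E$, every class on $X_i$ admits the decomposition $\alpha=\pi^*\beta+cE$ with $\beta=\pi_*\alpha$, and
\begin{equation*}
\pi_*(\alpha^{k-r-1})=\beta^{k-r-1}.
\end{equation*}
To prove this, I would expand $\alpha^{k-r-1}$ by the binomial formula and apply the projection formula; the cross terms carry factors $\pi_*(E^j)$ with $1\le j\le k-r-1$, which vanish by the Leray--Hirsch formula for the projective bundle $E=\mathbb{P}(N_{Y_{i-1}/X_{i-1}})\to Y_{i-1}$, since $j<\operatorname{codim} Y_{i-1}$ (the hypothesis $\dim Y_{i-1}\le r$ gives $\operatorname{codim} Y_{i-1}\ge k-r$). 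Iterating through the tower $X_m\to\cdots\to X_0=\pk$ yields
\begin{equation*}
(\Pi_0)_*(\alpha^{k-r-1})=\gamma^{k-r-1}, \qquad \gamma:=(\Pi_0)_*\alpha=aH,
\end{equation*}
where $\Pi_0:X_m\to\pk$ denotes the composite. Because $\alpha$ is nef, $\gamma$ is a pseudo-effective $(1,1)$-class on $\pk$, forcing $a\ge0$. If $a>0$, then $(\Pi_0)_*(\alpha^{k-r-1})=a^{k-r-1}H^{k-r-1}\ne0$, whence $\alpha^{k-r-1}\ne0$ and $\nu(\alpha)\ge k-r-1$.

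The main obstacle is therefore to show $a>0$ whenever $\alpha\ne0$ is nef. Suppose for contradiction $a=0$, so $(\Pi_0)_*\alpha=0$. Pairing against the generator $H^{k-1}$ of $H^{k-1,k-1}(\pk,\mathbb{R})$ gives $\alpha\wedge(\Pi_0^*H)^{k-1}=0$ in $H^{k,k}(X_m,\mathbb{R})$. I would apply Proposition \ref{DS}(2) iteratively: starting from $\alpha\wedge(\Pi_0^*H)^j=0$ in $H^{j+1,j+1}(X_m,\mathbb{R})$, the proposition produces reals $(a_j,b_j)\ne(0,0)$ with
\begin{equation*}
(a_j\alpha+b_j\Pi_0^*H)\wedge(\Pi_0^*H)^{j-1}=0 \quad\text{in}\quad H^{j,j}(X_m,\mathbb{R}),
\end{equation*}
and pushing down to $\pk$, using $(\Pi_0)_*\alpha=0$ together with $(\Pi_0)_*(\Pi_0^*H)^j=H^j\ne0$, forces $b_j=0$; hence $\alpha\wedge(\Pi_0^*H)^{j-1}=0$ in $H^{j,j}(X_m,\mathbb{R})$. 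Descending from $j=k-1$ to $j=1$, with Proposition \ref{DS}(1) handling the last step, gives $\alpha=\lambda\Pi_0^*H$; a final pushforward yields $\lambda=0$, so $\alpha=0$, contradicting our assumption.
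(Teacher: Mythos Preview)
Your argument is correct, and the two proofs compute the same quantity by slightly different means. The paper works directly on $X_m$: it writes $\alpha=aH_X+\sum_i c_iE_i$ in the natural basis and observes, by a generic--position argument in $\pk$, that a generic linear subspace of codimension $r+1$ misses each $\pi(E_i)$ (which has dimension $\le r$), so every monomial in the expansion of $\alpha^{k-r-1}\cdot H_X^{r+1}$ containing an $E_i$ vanishes, leaving $\alpha^{k-r-1}\cdot H_X^{r+1}=a^{k-r-1}$. You obtain the same number by pushing forward step by step and invoking the projective--bundle relation $\pi_*(E^j)=0$ for $1\le j<\operatorname{codim} Y$; the two computations are dual to one another via the projection formula.

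The genuine difference is in how $a>0$ is justified. The paper simply asserts that $\alpha$ nef and nonzero forces $\alpha\cdot H_X^{k-1}=a>0$; this is the standard fact that a nonzero nef class pairs positively with a big and nef class raised to the $(k-1)$st power, taken for granted there. You instead derive it from Proposition~\ref{DS} by the descending induction on $j$ in $\alpha\wedge(\Pi_0^*H)^{j}=0$, which makes your proof more self-contained relative to the tools already stated in the paper. So your route trades the short transversality computation for a pushforward identity, and the implicit big--nef positivity lemma for an explicit appeal to the Dinh--Sibony proposition; both are valid and end at the same place.
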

\begin{proof}
 It follows from \cite{Kawamata} that 
 $$\nu(\alpha)=\max\{p:\alpha^p\cdot A^{k-p}\neq 0\}$$
 where $A$ is any ample divisor. Therefore, it is enough to show that there exists a divisor $D$ such that $\alpha^{k-r-1}\cdot D^{r+1}\neq0.$ \\ \indent
 It is classical that \cite{GH} the Picard group $Pic(X)$ is generated by the classes $ H_X,E_1,\dots,E_m$ where $$\pi=\pi_{m-1}\circ\pi_{m-2}\circ \dots \circ \pi_1:X\to X_0=\pk $$
 $$H_X:=\pi^*(H)$$  $H$ is the class of a generic hyperplane in  $\pk$ and $E_i$ is the exceptional divisor of the blow up $\pi_i:X_{i+1}\to X_i$ and $$E_{i-1}:= \overline{\pi_{i}^{-1}(E_{i-1}-Y_i)}$$ is the class of the proper transform in $X_i$ of the exceptional divisor $E_{i-1}.$ Then we can represent the class $\alpha$ as $$\alpha=aH_X+\sum_ic_iE_i.$$ where $a,c_i\in\Bbb{R}$. Since $\pi(E_i) \subset \pk$  has codimension at least 2, a generic line in $\pk$ does not intersect $\pi(E_i).$ Then by the projection formula \cite{Fulton} we have $$E_i\cdot H_X^{k-1}=0.$$ Since $\alpha$ is nef and nonzero this implies that $\alpha \cdot H_X^{k-1}=a>0.$ \\ \indent
 Now, since the dimension of $Y_i$ is at most $r,$ a generic subvariety of $\pk$ of codimension $r+1$ does not intersect $\pi(E_i).$ This in turn implies that $E_i^{k-r-1}\cdot H_X^{r+1}=0$ hence, $$\alpha^{k-r-1}\cdot H_X^{r+1}=a^{k-r-1}>0.$$   
 % Now, the intersection theory \cite{Fulton} yields
  %$$H_X^k=1,\ H_X\cdot E_i=0\ \text{and}\ E_i^{k-1}=(-1)^{k-1}e_i^{k-1}$$ where $e_i$ is the class of a hyperplane in $E_i\cong \Bbb{P}^{k-1}.$ 
%Thus, letting $D=H_X$ we see that $\alpha^{k-1}\cdot H_X=a^{k-1}>0.$  
\end{proof}
 Hence, we deduce that $\alpha^j\neq0$ in $H^{j,j}(X,\Bbb{R})$ and $\lambda_1(f)^j=\lambda_j(f)$ for all $1\leq j\leq k-r-1.$ Therefore, applying the same argument to $f^{-1}$ and using Lemma \ref{degree} we conclude that
\begin{eqnarray*}
%\lambda_1(f)=\lambda_{k-1}(f^{-1})=\lambda_1(f^{-1})^{k-1}=\lambda_{k-1}(f)^{k-1}=\lambda_1(f)^{(k-1)^2}
\lambda_1(f)^{(k-r-1)^2}&=&\lambda_{k-r-1}(f)^{k-r-1}=\lambda_{r+1}(f^{-1})^{k-r-1}=\lambda_1(f^{-1})^{(k-r-1)(r+1)}\\
&=&\lambda_{k-r-1}(f^{-1})^{r+1}=\lambda_{r+1}(f)^{r+1}= \lambda_1(f)^{(r+1)^2}
\end{eqnarray*}
since $k>2+2r$ this contradicts $\lambda_1(f)>1.$
\end{proof}

\begin{proof}[Proof of Theorem \ref{numerical}]
 Let $f\in Aut(X)$ and assume that the first dynamical degree, $\lambda_1>1$  we will derive a contradiction. Since $f^*$ preserves the nef cone there exists a class $\beta \in \nef$ such that $f^*\beta=\lambda_1 \beta$.  \\ \indent
  Now, $\nu(\alpha)\geq k-1$ implies that $\alpha^{k-2}\not=0$ in $H^{k-2,k-2}(X,\Bbb{R}).$ On the other hand, since $f$ is an automorphism it preserves the cup product hence $$f^*(\alpha^{k-1}\wedge \beta)=\lambda_1 \alpha^{k-1}\wedge \beta.$$ Since the topological degree  $\lambda_{k}=1$ we must have $$\alpha^{k-1}\wedge \beta=0.$$ Then, by Proposition \ref{DS} there exists (up to a scaler multiple) unique real numbers $(a,b)\not=(0,0)$  such that $$(a\beta+b\alpha)\wedge \alpha^{k-2}=0.$$ Pulling-back this equation by $f,$ we obtain 
  $$(a\lambda_1\beta+b\alpha)\wedge \alpha^{k-2}=0. $$  
  Since $\lambda_1>1$, we see that $b=0.$ Thus, $$\beta \wedge \alpha^{k-2}=0.$$ Applying the same argument repeatedly we obtain that $$\beta\wedge \alpha=0.$$ Then Proposition \ref{DS} implies that $\beta=c\alpha$ for some $c\in \Bbb{R}_+$ but this contradicts $\lambda_1>1.$
 \end{proof}
 The following result is an immediate corollary of Theorem \ref{numerical} and \cite[Proposition 2.2]{Kawamata}:

\begin{cor}\label{itaka}
Let $X$ be a projective manifold and $f\in Aut(X).$ If there exists a nef $\Bbb{R}$-divisor $L$ such that $\kappa(L)\geq k-1$ and $f^*L\cong L$ then $h_{top}(f)=0.$ 
\end{cor}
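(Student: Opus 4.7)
The plan is to reduce the corollary directly to Theorem \ref{numerical} by translating the divisorial hypotheses into cohomological ones. First I would set $\alpha := c_1(L) \in H^{1,1}(X,\mathbb{R})$. Since $L$ is a nef $\mathbb{R}$-divisor, $\alpha$ lies in the nef cone $\nef$ by the characterization of nef classes via curves and the $\mathbb{R}$-linear extension from $NS(X)$ to $NS_{\Bbb{R}}(X)$.

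Next, I would invoke Proposition 2.2 of \cite{Kawamata}, which asserts that for any nef $\mathbb{R}$-divisor the numerical dimension dominates the Kodaira--Iitaka dimension, i.e.\ $\nu(L) \geq \kappa(L)$. Combined with the hypothesis $\kappa(L) \geq k-1$ this yields $\nu(\alpha) \geq k-1 = \dim X - 1$, which is exactly the numerical-dimension condition required by Theorem \ref{numerical}.

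The remaining point is to check that $f^*L \cong L$ descends to $f^*\alpha = \alpha$ in $\cohomology$. Here $\cong$ should be interpreted as $\mathbb{R}$-linear equivalence, so $f^*L - L$ is an $\mathbb{R}$-principal divisor and hence its Chern class vanishes, giving $f^*\alpha = \alpha$. With all the hypotheses of Theorem \ref{numerical} now verified for the class $\alpha$, the theorem directly yields $h_{top}(f) = 0$.

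I do not anticipate any real obstacle; the only subtle point is the precise meaning of $f^*L \cong L$ for an $\mathbb{R}$-divisor and the appeal to Kawamata's inequality $\nu(L) \geq \kappa(L)$, both of which are essentially citations rather than arguments to carry out. The substantive dynamical content is entirely absorbed into Theorem \ref{numerical}.
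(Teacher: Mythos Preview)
Your proposal is correct and is exactly the paper's approach: the paper states (without writing out a proof) that the corollary is an immediate consequence of Theorem~\ref{numerical} together with \cite[Proposition~2.2]{Kawamata}, and you have simply spelled out this deduction in detail.
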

 
Recall that a compact complex manifold is called \textit{Fano} if the anti-canonical bundle $-K_X$ is ample.  It follows from Kodaira embedding theorem that a Fano manifold is projective. %Recall that a Fano manifold of dimension one is isomorphic to projective line $\Bbb{P}^1.$ In dimension two, a Fano manifold is called a del Pezzo surface which is either isomorphic to $\Bbb{P}^1\times \Bbb{P}^1$ or a rational manifold obtained from $\Bbb{P}^2$ by blowing up $d\leq 8$ points in ``general position". In particular, del Pezzo surfaces are rational. It is known that there are non-rational examples of Fano manifolds of dimension three. In the case of the classical example is the complex projective space $\pk,$ the automorphism group of $\pk$, $Aut(\pk)=PGL(k+1,\Bbb{C}).$ In particular, every automorphism of $\pk$ has zero topological entropy.  
More generally, a compact complex manifold is called \textit{weak Fano} if the anti-canonical bundle $-K_X$ is big and nef. The following immediate corollary is well-known \cite{Zhang}:

\begin{cor}\label{fano}
Let $X$ be a projective weak Fano manifold and $f\in Aut(X)$ then $h_{top}(f)=0$.
 \end{cor}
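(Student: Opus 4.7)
The plan is to deduce Corollary \ref{fano} as a direct consequence of Corollary \ref{itaka} (or, equivalently, Theorem \ref{numerical}). The two ingredients one needs to assemble are: (a) that the anti-canonical class is preserved by any biholomorphism, and (b) that bigness of $-K_X$ gives the required Kodaira--Iitaka dimension, while nefness places us in the hypothesis of the earlier result.

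First I would observe that the canonical line bundle $K_X=\det T^*X$ is an intrinsic invariant of the complex manifold, so any $f\in Aut(X)$ satisfies $f^*K_X\cong K_X$ and hence $f^*(-K_X)\cong -K_X$. This handles the invariance condition.

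Next I would note that, by definition, $X$ being weak Fano means $-K_X$ is big and nef. Bigness translates to $\kappa(-K_X)=\dim X=k$, which in particular satisfies $\kappa(-K_X)\geq k-1$. Combined with the $f$-invariance from the previous step, Corollary \ref{itaka} applied to $L=-K_X$ immediately yields $h_{top}(f)=0$.

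There really is no obstacle here; the work has already been done in Theorem \ref{numerical}. The only thing worth stressing is that one could equally well invoke Theorem \ref{numerical} directly, using that a nef and big class $\alpha=c_1(-K_X)$ satisfies $\alpha^k>0$ and therefore $\nu(\alpha)=k\geq k-1$, with $f^*\alpha=\alpha$. Either route closes the argument in one line after setting up the invariance of $-K_X$.
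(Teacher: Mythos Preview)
Your proposal is correct and follows essentially the same route as the paper: observe that $f^*(-K_X)\cong -K_X$, use that weak Fano means $-K_X$ is big and nef, and invoke Corollary~\ref{itaka}. The paper's proof is just the terse two-sentence version of what you wrote; your added remark about going directly through Theorem~\ref{numerical} is a harmless alternative since Corollary~\ref{itaka} is itself an immediate consequence of that theorem.
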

 \begin{proof}
Note the $f$ preserves the divisor class $-K_X$ which is big and nef by definition of $X.$ Thus, the assertion follows from the Corollary \ref{itaka}.
\end{proof} 
 Blanc and Lamy \cite{BL} recently proved that blow up of the complex projective space $\Bbb{P}^3$ along a curve $C$ of degree $d$ and genus $g$ lying on a smooth quadric  gives a weak Fano manifold if $4d-30\leq g \leq 14$ or $(g,d)=(19,12)$ or there is no 5-secant line, 9-secant conic, nor 13-secant twisted cubic to $C.$ In particular, Corollary \ref{fano} implies that blowing up $\Bbb{P}^3$ along such curves does not give rise to a rational manifold admitting an automorphism with positive entropy. More generally, it was observed in \cite{tuyen} that if a rational manifold is obtained from $\Bbb{P}^3$ by blowing up finitely many curves with no common intersection then any automorphism of $X$ is of zero entropy.

 \bibliographystyle{alpha}
\bibliography{biblio}

\begin{thebibliography}{McM07}

\bibitem[Bir67]{Bir}
G.~Birkhoff.
\newblock Linear transformations with invariant cones.
\newblock {\em Amer. Math. Monthly}, 74:274--276, 1967.

\bibitem[BK06]{BK2}
E.~Bedford and K.~Kim.
\newblock Periodicities in linear fractional recurrences: degree growth of
  birational surface maps.
\newblock {\em Michigan Math. J.}, 54(3):647--670, 2006.

\bibitem[BK10]{BKauto}
E.~Bedford and K.~Kim.
\newblock Continuous families of rational surface automorphisms with positive
  entropy.
\newblock {\em Math. Ann.}, 348(3):667--688, 2010.

\bibitem[BL]{BL}
J.~Blanc and S.~Lamy.
\newblock Weak fano threefolds obtained by blowing-up a space curve and
  construction of sarkisov links,{\tt arxiv:1106.3716}, 2012.

\bibitem[Can99]{C2}
S.~Cantat.
\newblock Dynamique des automorphismes des surfaces projectives complexes.
\newblock {\em C. R. Acad. Sci. Paris S\'er. I Math.}, 328(10):901--906, 1999.

\bibitem[Can01]{C1}
S.~Cantat.
\newblock Dynamique des automorphismes des surfaces {$K3$}.
\newblock {\em Acta Math.}, 187(1):1--57, 2001.

\bibitem[Dem92]{DemS}
J.-P. Demailly.
\newblock Singular {H}ermitian metrics on positive line bundles.
\newblock In {\em Complex algebraic varieties ({B}ayreuth, 1990)}, volume 1507
  of {\em Lecture Notes in Math.}, pages 87--104. Springer, Berlin, 1992.

\bibitem[DS04a]{DSauto}
T.-C. Dinh and N.~Sibony.
\newblock Groupes commutatifs d'automorphismes d'une vari\'et\'e
  k\"ahl\'erienne compacte.
\newblock {\em Duke Math. J.}, 123(2):311--328, 2004.

\bibitem[DS04b]{DS04}
T.-C. Dinh and N.~Sibony.
\newblock Regularization of currents and entropy.
\newblock {\em Ann. Sci. \'Ecole Norm. Sup. (4)}, 37(6):959--971, 2004.

\bibitem[Ful98]{Fulton}
W.~Fulton.
\newblock {\em Intersection theory}, volume~2 of {\em Ergebnisse der Mathematik
  und ihrer Grenzgebiete. 3. Folge. A Series of Modern Surveys in Mathematics
  [Results in Mathematics and Related Areas. 3rd Series. A Series of Modern
  Surveys in Mathematics]}.
\newblock Springer-Verlag, Berlin, second edition, 1998.

\bibitem[GH]{GH}
P.~Griffiths and J.~Harris.
\newblock {\em Principles of algebraic geometry}.
\newblock Wiley Classics Library. John Wiley \& Sons Inc., New York, 1994.
\newblock Reprint of the 1978 original.

\bibitem[Kaw85]{Kawamata}
Y.~Kawamata.
\newblock Pluricanonical systems on minimal algebraic varieties.
\newblock {\em Invent. Math.}, 79(3):567--588, 1985.

\bibitem[Lie78]{Lieberman}
D.~I. Lieberman.
\newblock Compactness of the {C}how scheme: applications to automorphisms and
  deformations of {K}\"ahler manifolds.
\newblock In {\em Fonctions de plusieurs variables complexes, {III} ({S}\'em.
  {F}ran\c cois {N}orguet, 1975--1977)}, volume 670 of {\em Lecture Notes in
  Math.}, pages 140--186. Springer, Berlin, 1978.

\bibitem[McM07]{McMullen}
C.~T. McMullen.
\newblock Dynamics on blowups of the projective plane.
\newblock {\em Publ. Math. Inst. Hautes \'Etudes Sci.}, (105):49--89, 2007.

\bibitem[Nag61]{Nagata}
M.~Nagata.
\newblock On rational surfaces. {II}.
\newblock {\em Mem. Coll. Sci. Univ. Kyoto Ser. A Math.}, 33:271--293,
  1960/1961.

\bibitem[Tru12]{tuyen}
T.~T. Truong.
\newblock On automorphisms of blowups of $\mathbb{P}^3$.
\newblock {\em arXiv:1202.4224}, 2012.

\bibitem[Zha09]{Zhang}
D.-Q. Zhang.
\newblock Dynamics of automorphisms on projective complex manifolds.
\newblock {\em J. Differential Geom.}, 82(3):691--722, 2009.

\end{thebibliography}
\end{document}